\newcommand{\commentout}[1]{}
\newcommand{\R}{\mathbb{R}}
\newcommand{\N}{\mathbb{N}}
\newcommand {\Chi} {{\bf \raise 2pt \hbox{$\chi$}} }
\newcommand {\ep}  {\epsilon}
\newcommand{\beq}{\begin{equation}}
\newcommand{\eeq}{\end{equation}}
\newcommand{\bea} {\begin{array}{rl}}
\newcommand{\eea} {\end{array}}
\newcommand{\bepa}{\left\{ \begin{array}{l}}
\newcommand{\eepa} {\end{array}\right.}
\newtheorem{theorem}{Theorem}[section]
\newtheorem{lemma}[theorem]{Lemma}
\newtheorem{proposition}[theorem]{Proposition}
\newcommand {\no}{\noindent}
\newcommand{\qed}{{ \hfill
                       {\unskip\kern 6pt\penalty 500 \raise -2pt\hbox{\vrule\vbox to 6pt{\hrule width 6pt
                       \vfill\hrule}\vrule} \par}   }}
\title{ {Well posedness for multi-dimensional junction problems with  Kirchoff-type conditions }}
 \author{
Pierre-Louis Lions$^{1}$ and Panagiotis Souganidis$^{2,3}$}
\date{\today}
\begin{document}

\maketitle
\pagestyle{plain}
%\tableofcontents
\pagenumbering{arabic}

\begin{abstract}
\no 
\smallskip

\no We consider multi-dimensional junction problems for first- and second-order pde with  Kirchoff-type  Neumann boundary conditions and we show that  their generalized viscosity solutions are unique. It follows that any viscosity-type approximation of the junction problem converges to a unique limit. The results here are  the first of this kind and extend previous work by  the authors for one-dimensional junctions. The  proofs are  based on a careful analysis of the behavior of the viscosity solutions near the junction, including a blow-up argument that reduces the general problem to a one-dimensional one. As in our previous note, no convexity assumptions and control theoretic interpretation of the solutions are needed. 

%which leads to the fact that the viscosity The paper is  introduce a notion of state-constraint viscosity solutions for one dimensional ``junction''-type problems for Hamilton-Jacobi equations with non convex coercive Hamiltonians and  study its well-posedness and stability properties. We show that viscosity approximations either select the state-constraint solution or have a unique limit. We also introduce 
%another type of approximation by fattening the domain. We also make connections with existing results for convex equations and discuss extensions to time dependent and/or  multi-dimensional problems.

\end{abstract}

\noindent {\bf Key words and phrases}  Hamilton-Jacobi equations, networks, discontinuous Hamiltonians, junction problesm, stratification problems, comparison principle, viscosity solutions.
%\end{key words}
\\
\noindent {\bf AMS Class. Numbers.} 35F21, 49L25, 35B51, 49L20.
\bigskip

\section{Notation and Terminology} 

Given $x\in \R^d$ we write $x=(x',x_d)$ with $x'\in \R^{d-1}$.  For $i=1,\ldots,K$, $\Pi_i:=\{(x',x_{d,i}) \in \R^d : x'\in \R^{d-1}, x_{d,i} \leq 0\}$ are half-planes  intersecting along the line $L:=\{(x',0): x'\in \R^{d-1}\}$ and set $\Pi:= \bigcup_{i=1}^K \Pi_i$. For simplicity   we write $x_i$ instead of $x_{d,i}.$  Given $u \in C(\overline \Pi;\R)$, if $(x',x_i)\in \overline \Pi_i$, we write $u_i(x',x_i):=u(0,\ldots, x', x_i, 0.\ldots);$ when possible, to simplify the notation,  we drop the subscript on $u_i$ and simply write $u(x', x_i).$  In this setting  
$u_{x_i}(0,x_i)$ is the exterior normal derivative of $u_i:\Pi \to \R$ on $L$. We consider  $K$-junction one dimensional problems in the domain  $I:=\bigcup_{i=1}^{K} I_i$ with junction $\{0\}$, where, for $i=1,\ldots,K$,  $I_i :=(-a_i, 0)$ and $a_i \in [-\infty, 0)$. 
We work with functions $u \in C({\overline I}; \R)$ and,  for $x=(x_1,\ldots,x_K) \in \bar I$, we write $u_i(x_i)=u(0, \ldots, x_i,\dots, 0)$;  when possible, to simplify the writing,  we drop the subscript on $u_i$ and write $u(x_i)$. %identify the argument.
% if $x_j=0$ for $j \neq i$ and $x_i \in {\bar I}_i$; 
We also use the notation $u_{x_i}$ and $u_{x_i x_i}$ for the first and second derivatives of $u_i$ in $x_i$.   
% Finally, to avoid unnecessarily long statements, % repetitions,  we do not repeat, unless needed, that $i=1,\dots,K$. 
For  $w\in C(\overline I \times [0,T])$ and  $t_0 \in (0,T]$, $J^+w(0,t_0)$ and $J^-w(0,t_0)$ denote respectively the super- and sup-jets of $w$ at  $(0,t_0)$, which may be, of course, empty. If  $(p_1,\dots, p_K,a)\in J^+w(0,t_0)$, then, for all $(x,t) \in \overline I \times [0,T]$,  $w(x_i,t) \leq w(0,t_0) + p_i x_i + a(t-t_0) + \text{o}(|x| + |t-t_0|).$ If
$(p_1,\dots, p_K,a)\in J^-w(0,t_0),$ then  $w(x_i,t) \geq w(0,t_0) + p_i x_i + a(t-t_0) + \text{o}(|x| + |t-t_0|).$
\smallskip

Throughout the paper we work with viscosity sub- and super-solutions. In most cases, however, we will not be using the term viscosity. Also we will not keep repeating that $i\in \{1,\ldots, K\}$ but rather we will say for all $i$.
%%%%%%%%%%%%%%
\section{Introduction}

\no We study the well-posedness of the generalized viscosity solutions to time dependent multi-dimensional junction problems satisfying 
a Kirchoff-type Neumann condition at the junction. We prove that the solutions satisfy a comparison principle  and, hence, are unique. It is then immediate that viscosity approximations satisfying the same boundary condition converge to the unique solution. 
Our results, which are the first of this kind,  are simple, self-contained and depend on elementary considerations about viscosity solutions and, we emphasize,  do not require any convexity assumptions and the control theoretical interpretation of the solutions. 
\smallskip

This work is a continuation of our previous paper (Lions and Souganidis \cite{LS1}) where we introduced the notion of state constraint solution to one-dimensional junction problems, proved its well-posedness, and considered, for the first time,  the limit of Kirchoff-type viscosity approximations.  
\smallskip

We also show  that the so-called flux limiter solutions introduced and studied in the references below for convex problems reduce to Kirchoff-type generalized viscosity solutions. Hence, uniqueness follows immediately by the simple arguments in this note. 
\smallskip

Among the long list of references on this topic with convex Hamiltonians we refer to Achdou and Tchou \cite{AT}, Barles, Briani and Chasseigne\cite{BBC1,  BBC2}, Barles, Briani, Chasseigne and Imbert \cite{BBCI}, Barles and Chasseigne \cite{BC}, Bressan and Hong \cite{BH}, Imbert and Monneu \cite{IM} and  Imbert and Nguen \cite{IN}. % and Barles, Briani, Chasseigne and Imbert \cite{BBCI}.
\smallskip

We are interested in the well-posedeness of continuous solutions $u: \overline \Pi \to \R$ to the Kirchoff -type initial boundary value problem
\begin{equation}\label{takis101}
\begin{cases}
u_{i,t} + H_i(Du_i, u_i, x,t)=0 \ \ \text{in} \ \ \Pi_i \times (0,T],\\[1mm]
\min\left(\Sigma_i u_{i, x_i}- B, \min_i (u_{i,t} + H_i(Du_i, u_i, x,t))\right) \leq 0 \ \  \text{on} \ \ L\times  (0,T],\\[1mm]
\max\left(\Sigma_i u_{i, x_i}-B, \max_i (u_{i,t} + H_i(Du_i, u_i, x,t))\right) \geq 0 \ \  \text{on} \ \ L\times  (0,T], %\\[1mm]
\end{cases}
\end{equation}
with
\begin{equation} \label{takis20}
B\in \R \ \ \text{and}  \ \ u(\cdot, 0)=u_{0} \ \ \text{on} \ \  \overline \Pi,
\end{equation}
where 
\begin{equation}\label{takis111}
u_0\in BUC(\overline \Pi),
\end{equation}
and, for each $i$,
\begin{equation}\label{takis121}
\begin{cases}
H_i  \  \text{is \ coercive in $p$ uniformly on $x,t$ and bounded $u$, Lipshitz continuous in $u$ and $t$,}\\[1mm]
\text{and uniformly continuous in $p,u,x,t$ for bounded $p$ and $u$.}
\end{cases}
\end{equation}
As always for time-independent problems the Lipshitz continuity of $H_i$ in $u$ is replaced by 
\begin{equation}\label{takis13}
H_i  \  \text{is strictly increasing in $u$.}
\end{equation}
We remark that, as it will be clear from the proofs,  the particular choice of the Neumann condition in \eqref{takis101} 
is by no means essential. The arguments actually  apply to  more general boundary  conditions of the form $G(u_{x_1}, \ldots, u_{x_K},u),$
with the map $(p_1,\ldots, p_k, u) \to G(u_{x_1}, \ldots, u_{x_K},u)$ strictly increasing with respect to all its arguments.
\smallskip

The main result is:
\begin{theorem}\label{main1}
Assume \eqref{takis121}. 
%If $u_0, v_0 \in BUC(\overline Pi)$ are such that, for each $i$, $u_{0,i} \leq v_{0,i}$ on $\overline \Pi_i$ 
If $u,v \in BUC(\overline \Pi \times [0,T])$ are respectively a sub-and super-solution to \eqref{takis101} with $u_i(\cdot,0)\leq v_i(\cdot,0)$ on $\overline \Pi_i$, then $u\leq v$ on $\overline \Pi \times [0,T].$
Moroever, for each  $u_0 \in BUC(\overline \Pi)$, the initial boundary value problem \eqref{takis20}, \eqref{takis111}  has a unique solution $u \in BUC(\overline \Pi \times [0,T]).$
\end{theorem}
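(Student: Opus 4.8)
The plan is to prove the comparison principle by the standard doubling-of-variables technique, but with the crucial modification that the analysis near the junction line $L$ must be handled by a blow-up argument that reduces to the one-dimensional situation established in \cite{LS1}. First I would assume, for contradiction, that $\sup_{\overline\Pi\times[0,T]}(u-v)>0$, and to make the usual arguments rigorous I would introduce penalization and regularization: subtract a small linear term $\eta t$ to produce strict sub-solutions, and double variables by considering
\begin{equation*}
\Phi(x,y,t,s)=u(x,t)-v(y,s)-\frac{|x-y|^2}{2\e}-\frac{|t-s|^2}{2\beta}-\text{(boundary/compactness penalizations)}.
\end{equation*}
Using the $BUC$ hypothesis and coercivity \eqref{takis121}, I would show a maximum is attained at an interior-in-time point $(\hat x,\hat y,\hat t,\hat s)$, and that along a subsequence as $\e\to 0$ the doubling points converge, with $|\hat x-\hat y|^2/\e\to 0$ and the gradients $p_\e:=(\hat x-\hat y)/\e$ remaining bounded by coercivity.

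The decisive case split is on the limiting location of the maximum point. If it lies in the open half-plane interior of some fixed sheet $\Pi_i$ away from $L$, the problem is locally a standard single-equation comparison and the sub/super-solution inequalities for the PDE $u_{i,t}+H_i=0$ close the argument in the classical way. The genuinely new case is when the maximum point approaches the junction $L$. There I would perform the blow-up: rescale space around the contact point on $L$ at the scale dictated by the penalization, so that the tangential $x'$-directions flatten out and, in the limit, the geometry becomes translation-invariant in $x'$. The key structural observation is that because the Kirchoff condition in \eqref{takis101} only couples the sheets through the normal derivatives $u_{i,x_i}$ and the value $u$, the blown-up limit profile depends only on the normal variables $(x_1,\dots,x_K)$ and satisfies a $K$-junction one-dimensional problem of exactly the type treated in \cite{LS1}; the tangential derivatives are frozen as parameters entering the effective Hamiltonians. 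One then invokes the one-dimensional comparison/uniqueness result to derive the required inequality at the junction.

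The heart of the matter, and the step I expect to be the main obstacle, is making the blow-up rigorous at the level of jets: I must show that a super-solution's sub-jet data $(p_1,\dots,p_K,a)\in J^-v(0,t_0)$ and a sub-solution's super-jet data pass correctly to the rescaled limit, so that the limiting one-dimensional object genuinely inherits the Kirchoff sub/super-solution inequalities
\begin{equation*}
\min\Bigl(\Sigma_i u_{i,x_i}-B,\ \min_i(u_{i,t}+H_i)\Bigr)\le 0,\qquad \max\Bigl(\Sigma_i v_{i,x_i}-B,\ \max_i(v_{i,t}+H_i)\Bigr)\ge 0.
\end{equation*}
This requires carefully controlling the tangential second-order penalization terms so they do not pollute the normal-derivative balance that the Kirchoff condition constrains; coercivity and the uniform continuity of $H_i$ in \eqref{takis121} are what let me freeze the tangential slopes and pass to the limit. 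Once the comparison $u\le v$ is established, existence in $BUC(\overline\Pi\times[0,T])$ follows by Perron's method: I would build sub- and super-solutions from the initial data using \eqref{takis111} and the coercivity of the $H_i$ to control the behavior near $L$, verify the Kirchoff condition is preserved under the usual sup/inf envelope operations, and conclude that the Perron envelope is the unique continuous solution.
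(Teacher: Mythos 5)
Your outline has two genuine gaps, and together they defer exactly the content that constitutes the proof. The first is the engine you choose: doubling of variables with the penalization $|x-y|^2/2\e$ is precisely what is known to break down at a junction. When the two maximum points lie on different sheets, or when one of them lands on $L$, the quadratic term is not an admissible test function for the Kirchoff condition: a super-jet of $u$ at a point of $L$ is a $K$-tuple of one-sided slopes, one per branch, while the penalization produces a single slope toward the other point; moreover the sub-solution inequality at $L$ is a \emph{minimum} over branches and the super-solution inequality is a \emph{maximum}, so the two viscosity inequalities produced by doubling need not refer to the same branch and cannot simply be subtracted. This branch mismatch is the fundamental obstruction in the junction literature, and the paper's proof is built to avoid it: no doubling in space at all, but rather Lipschitz $u,v$, sup/inf convolution in time only (so that $u,v$ become semiconvex/semiconcave in $t$ and are differentiable in $t$ at the maximum point of $u-v-\delta t$), followed by a blow-up at that point.

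The second, more serious, gap is that after your blow-up you propose to ``invoke the one-dimensional comparison/uniqueness result'' of \cite{LS1}. That result is not available as a black box: \cite{LS1} concerns state-constraint solutions and limits of viscosity approximations, and the comparison principle for generalized Kirchoff sub/super-solutions in one dimension is exactly what the present paper proves --- it is the theorem, specialized to $d=1$, so citing it is circular. Concretely, what is missing from your plan is the mechanism that closes the argument once the blow-up has produced time-independent profiles with $U_i\le V_i$, $U_i(0)=V_i(0)=0$, $a+\overline H_i(U_{i,x_i})\le 0$, $b+\overline H_i(V_{i,x_i})\ge 0$ and $a>b$. The pointwise ordering $U_i\le V_i$ does \emph{not} give the ordering of jet endpoints one would want, namely $\underline{p}_i\ge \overline{q}_i$ with $J^+U_i(0)=(-\infty,\underline{p}_i]$ and $J^-V_i(0)=[\overline{q}_i,\infty)$; it only gives $\overline{p}_i\ge \underline{q}_i$ for the limsup/liminf difference quotients, and $\overline{p}_i\notin J^+U_i(0)$ unless $U_i$ is differentiable at $0$. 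The paper bridges this with two ingredients that have no counterpart in your proposal: the regularity/blow-up result of Jensen and Souganidis \cite{JS}, which upgrades the equation to $a+\overline H_i(p)\le 0$ for \emph{every} $p\in[\underline{p}_i,\overline{p}_i]$ (and symmetrically for $V_i$), and the elementary Lemma~\ref{lemma}, whose hypotheses \eqref{takis1000} are designed to tolerate the min/max branch mismatch by requiring the Kirchoff inequalities for all $p_i'\le p_i$ and all $q_i'\ge q_i$ and still concluding $a\le b$. You correctly flag that passing the jets to the blown-up limit is ``the main obstacle,'' but that obstacle is not a technicality to be controlled by penalization estimates --- resolving it is the theorem, and your proposal does not contain the tools that resolve it.
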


%The existence of the solutions is a direct consequence of the comparison result and Perron's method
As it will become apparent from the proof, it is possible to generalize the result to the problem 
\begin{equation}
\begin{cases}
u_{i,t} + H_i(x_i D^2u_i, Du_i, u_i, x,t)=0 \ \ \text{in} \ \ \Pi_i \times (0,T],\\[1mm]
\min \left(\Sigma_i u_{i, x_i} -B, \min_i (u_{i,t} + H_i(0,Du_i, u_i, x,t))\right) \leq 0 \ \  \text{on} \ \ L\times  (0,T],\\[1mm]
\max \left(\Sigma_i u_{i, x_i}-B, \max_i (u_{i,t} + H_i(0, Du_i, u_i,x ,t))\right) \geq 0 \ \  \text{on} \ \ L\times  (0,T], \\[1mm]
u (\cdot, 0)=u_{0} \ \ \text{on} \ \ \overline \Pi,
\end{cases}
\end{equation}
when, in addition to \eqref{takis121}, each $H_i$ is degenerate elliptic with respect to the Hessian. Since the arguments are almost identical to the ones for the proof of Theorem~\ref{main1}, we do not present  any  details.
\smallskip

Next we state the result about the convergence of  viscosity approximations to \eqref{takis101}. The claim is immediate from the fact that any limit is solution to \eqref{takis101} and, hence, we do not write the proof. We remark that we can easily use ``more complicated'' second-order approximations than the one below.
\smallskip

For $\ep>0$ consider the initial boundary value problem 
\begin{equation}\label{takis1010}
\begin{cases}
u_{i,\ep, t} - \ep\Delta u_{i,\ep}  + H_i(Du_{i, \ep}, u_{i,\ep}, x,t)=0 \ \ \text{in} \ \ \Pi_i \times (0,T],\\[1mm]
\Sigma_i u_{i,\ep,{ x_i}}=B, \ \  \text{on} \ \  L\times (0,T],\\[1mm]
u_{i,\ep}(\cdot, 0)=u_{0,i} \ \ \text{on} \ \  \overline \Pi_i,
\end{cases}
\end{equation}
which, in view of the \eqref{takis121}, if $u_0$ satisfies \eqref{takis111}  has a unique solution $u\in BUC(\overline \Pi \times [0,T]).$
\begin{theorem}\label{main2} Assume \eqref{takis111} and  \eqref{takis121}. Then $u=\lim_{\ep\to 0} u_\ep$ exists and $u$ is the unique solution to \eqref{takis101}. 
\end{theorem}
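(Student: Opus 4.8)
The plan is to prove the convergence by the Barles--Perthame half-relaxed limit method, using Theorem~\ref{main1} as the engine that forces the upper and lower limits to coincide. First I would record a uniform bound: for constants $C,M$ large enough, depending only on $\|u_0\|_\infty$, $B$ and the bounds on the $H_i$ coming from \eqref{takis121}, the affine-in-time functions $\pm(M+Ct)$ are respectively a super- and a sub-solution of the viscous problem \eqref{takis1010} in the relaxed sense (their Kirchoff sum is $0$, while $\pm C + \max_i/\min_i H_i(0,\cdot)$ carries the correct sign by the monotonicity and boundedness in $u$). Since \eqref{takis1010} is a uniformly parabolic Kirchoff--Neumann problem, its comparison principle gives $|u_\ep|\le M+CT$ uniformly in $\ep$, so the half-relaxed limits
\[
\overline u(x,t):=\limsup_{\ep\to 0,\ (y,s)\to(x,t)} u_\ep(y,s), \qquad \underline u(x,t):=\liminf_{\ep\to 0,\ (y,s)\to(x,t)} u_\ep(y,s)
\]
are finite, with $\overline u$ upper semicontinuous, $\underline u$ lower semicontinuous, and $\underline u\le\overline u$ on $\overline\Pi\times[0,T]$.

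The core step is to show that $\overline u$ is a sub-solution and $\underline u$ a super-solution of \eqref{takis101}; I discuss only $\overline u$, the other case being symmetric. Let $\phi$ be smooth with $\overline u-\phi$ having a strict local maximum at $(0,t_0)\in L\times(0,T]$. The standard half-relaxed limit lemma produces $\ep_n\to 0$ and local maximum points $(x_n,t_n)\to(0,t_0)$ of $u_{\ep_n}-\phi$ with $u_{\ep_n}(x_n,t_n)\to\overline u(0,t_0)$. After passing to a subsequence, there are two cases. If $x_n$ lies in the interior of a fixed branch $\Pi_i$, then, $u_{\ep_n}$ being a classical solution, $Du_{\ep_n}=D\phi$ and $D^2u_{\ep_n}\le D^2\phi$ there, so $-\ep_n\Delta u_{\ep_n}\ge-\ep_n\Delta\phi$; inserting this into \eqref{takis1010} and letting $n\to\infty$ gives, by the continuity in \eqref{takis121}, $\phi_t+H_i(D\phi_i,\phi,0,t_0)\le 0$, hence $\min_i(\phi_t+H_i)\le 0$ and the sub-solution inequality holds. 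If instead $x_n\in L$ for all $n$, then at a maximum attained on the boundary the derivative of $u_{\ep_n}-\phi$ in the exterior normal $+x_i$ direction into each branch is nonnegative, i.e. $u_{\ep_n,x_i}\ge\phi_{x_i}$ for every $i$; summing and using the exact Kirchoff identity $\Sigma_i u_{\ep_n,x_i}=B$ yields $\Sigma_i\phi_{x_i}\le B$, i.e. $\Sigma_i\phi_{x_i}-B\le 0$, which again makes $\min(\Sigma_i\phi_{x_i}-B,\ \min_i(\phi_t+H_i))\le 0$. This is the heart of the matter, and the step I expect to be the main obstacle: the Neumann condition may develop a boundary layer, so one cannot hope for the exact identity $\Sigma_i u_{x_i}=B$ to survive in the limit, and it is precisely the min/max (relaxed) formulation of \eqref{takis101}, together with the sign of the normal derivatives at a constrained extremum, that absorbs whether the extremum sits on $L$ or in a branch and rescues the argument.

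To launch the comparison I must also pin the behaviour at $t=0$. Since $u_\ep(\cdot,0)=u_0$ for every $\ep$ and $u_0\in BUC(\overline\Pi)$, a local barrier argument at the initial time---placing smooth super- and sub-solutions of \eqref{takis1010} above and below $u_0$ near each point and valid for short times---shows $\overline u(\cdot,0)\le u_0\le\underline u(\cdot,0)$, hence equality with $u_0$. Then $\overline u$ and $\underline u$ are an upper semicontinuous sub-solution and a lower semicontinuous super-solution of \eqref{takis101} with ordered initial data, so the comparison principle of Theorem~\ref{main1}---whose proof applies verbatim to semicontinuous sub- and super-solutions---gives $\overline u\le\underline u$. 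Combined with $\underline u\le\overline u$ this forces $\overline u=\underline u=:u$; being simultaneously upper and lower semicontinuous, $u$ is continuous, $u_\ep\to u$ locally uniformly, and $u$ is both a sub- and a super-solution, hence the unique (again by Theorem~\ref{main1}) solution of \eqref{takis101}. As the limit is independent of the extracted subsequence, the whole family $u_\ep$ converges, which is the assertion of Theorem~\ref{main2}.
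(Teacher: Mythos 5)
Your argument is correct and is precisely the route the paper has in mind: the authors explicitly omit the proof, stating that it is ``an immediate consequence of well known estimates and the uniqueness result,'' i.e.\ uniform bounds, stability of the relaxed (min/max) Kirchoff formulation under half-relaxed limits, and the comparison principle of Theorem~\ref{main1}. Your identification of the key point --- that the exact Neumann identity $\Sigma_i u_{\ep,x_i}=B$ need not pass to the limit, and that the one-sided sign of the normal derivatives at a constrained extremum is exactly what the relaxed formulation of \eqref{takis101} is designed to absorb --- matches the reason the generalized (min/max) boundary condition is used in the statement of the limit problem.
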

Since the proof is an immediate consequence of well known estimates and the uniqueness result, we will not discuss it any further.
\smallskip

%In work in  preparation \cite{LS2}, we discuss problems with more general dependence on the Hessian both in the equations and along the junctions. We also consider ``stratification''-type problems, that is junctions with branches of different dimension,  
%and, we  present  results about the convergence of semi-discrete in time approximations with error estimates.  Finally, we consider solutions which are not necessarily Lipschitz. 
%\smallskip
We also show that, in the context of the one-dimensional time dependent junction problems, the flux-limiter solutions put forward in \cite{IM} are actually  generalized viscosity solution to \eqref{takis1} with appropriate choice of B in the Kirchoff condition, and, hence, are unique. This provides a simple and straightforward proof of the uniqueness without the need to consider cumbersome test functions and invoke any convexity.

\smallskip 
Following the last remark, we emphasize that Kirchoff-type conditions appear to be the ``correct'' ones, that is the only conditions that are compatible with the maximum principle. This can be easily seen, for example, at the level of second-order equations by considering affine solutions in each branch.
\smallskip

In a forthcoming paper  \cite{LS2}, we discuss problems with more general dependence on the Hessian both in the equations and along the junctions. We also consider ``stratification''-type problems, that is junctions with branches of different dimension,  
and, we  present  results about the convergence of semi-discrete in time approximations with error estimates.  Finally, we consider solutions which are not necessarily Lipschitz. 
\smallskip

In this note, to simplify the notation and explain the ideas better,  we present all the arguments in the special case $d=1$, in which case \eqref{takis101} reduces to 
\begin{equation}\label{takis1}
\begin{cases}
u_{i,t} + H_i(u_{i, x_i}, x,t)=0 \ \  \text{in} \ \ I\times(0,T],\\[1mm]
\min \left(\Sigma_i u_{i,x_i} -B, \min_i(u_{i,t} + H_i(u_{i, x_i}, 0,t))\right)\leq 0 \ \ \text{on} \ \  \{0\} \times(0,T],\\[1mm]   
\max \left(\Sigma_i u_{i, x_i}-B, \max_i(u_{i,t} + H_i(u_{i, x_i}, 0,t))\right) \geq 0 \ \ \text{on} \ \  \{0\} \times(0,T],\\[1mm]
u(\cdot, 0)=u_0 \ \ \text{on} \ \  \overline  I.
\end{cases}
\end{equation}

\subsection*{Organization of the paper.} \no In the next section we state and prove an elementary lemma which is the basic tool for the proof of the comparison principle which is presented in Section~4. Section~5 is about the relation with the 
the flux limiters.

\section{A general lemma}

In this section we state and prove a general lemma which is the basic tool for the proof of Theorem \ref{main1}. It applies to problems of  one-dimensional junctions with Kirchoff condition and expands the class of ``gradients'' that can be used in the inequalities at the junction.
\begin{lemma}\label{lemma} Assume that $H_1, \ldots H_K \in C(\R)$, $p_1,\ldots,p_K, q_1,\dots, q_K \in \R$ and $a, b \in R$ are such that, for all $i\in \{1,\ldots,K\}$, 
% 
%%(i) $p_i\geq q_i$ and $a + H_i(p_i) \leq 0 \leq b +H_i(q_i)$, 
%%\vskip.05in
%%and
%%\vskip.05in
%
 \begin{equation}\label{takis1000}
\begin{cases}
(i)~ ~~p_i\geq q_i \ \text{and} \ a + H_i(p_i) \leq 0 \leq b +H_i(q_i),\\[1mm]
(ii)~ ~\min \left(\Sigma_i p_i',  \min_i(a + H_i(p_i'))\right) \leq 0 \  \text{for all} \  p_i' \leq p_i,\\[1mm]
(iii) ~ \max \left(\Sigma_i q_i',  \max_i(b+ H_i(q_i')) \right)\geq0 \  \text{for all} \  q_i' \geq q_i.
\end{cases}
\end{equation}
%\end{array}
\smallskip
%
% $\min \left(\Sigma_i p_i',  \min_i(a + H_i(p_i'))\right) \leq 0$ \  for all \  $p_i' \leq p_i$ \  and  \ $\max \left(\Sigma_i q_i',  \max_i(b+ H_i(q_i')) \right)\geq0$ \  for all \  $q_i' \leq q_i$.
%
%
Then $a\leq b$.
\end{lemma}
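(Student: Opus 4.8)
The plan is to argue by contradiction. Suppose $a>b$ and set $m:=-a$ and $n:=-b$, so that $m<n$. Then condition (i) of \eqref{takis1000} becomes
\[
H_i(p_i)\le m<n\le H_i(q_i)\qquad\text{together with}\qquad p_i\ge q_i,\quad\text{for every }i.
\]
A preliminary observation is that every branch interval is nondegenerate under this assumption: if $p_i=q_i$ for some $i$, then $H_i(p_i)=H_i(q_i)$ would force $n\le m$, a contradiction; hence $q_i<p_i$ for all $i$. This is exactly what allows the freedom exploited below, so I would flag the degenerate case first.

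Next I would fix any intermediate level $c\in(m,n)$, for instance $c=(m+n)/2$. Since each $H_i$ is continuous on $[q_i,p_i]$ with $H_i(q_i)\ge n>c>m\ge H_i(p_i)$, the intermediate value theorem yields a point $r_i\in(q_i,p_i)$ with $H_i(r_i)=c$, and by continuity there is $\delta_i>0$ with $(r_i-\delta_i,r_i+\delta_i)\subset(q_i,p_i)$ and $m<H_i(s)<n$ throughout this interval. The key idea is then to use the one remaining degree of freedom, namely the sum: keeping $s_i=r_i$ for $i\ge 2$ and letting $s_1$ range over $(r_1-\delta_1,r_1+\delta_1)$, the quantity $\Sigma_i s_i$ ranges over a nondegenerate open interval of reals, which cannot be contained in $\{0\}$. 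Hence I may select $(s_1,\dots,s_K)$ with $s_i\in[q_i,p_i]$, satisfying $m<H_i(s_i)<n$ for all $i$ and, in addition, $\Sigma_i s_i\neq 0$.

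To close the argument I examine the sign of $\Sigma_i s_i$. If $\Sigma_i s_i>0$, then since $s_i\le p_i$ I apply (ii) to the tuple $(s_1,\dots,s_K)$: because $a+H_i(s_i)=H_i(s_i)-m>0$ for every $i$, both entries of the minimum in (ii) are strictly positive, contradicting (ii). Symmetrically, if $\Sigma_i s_i<0$, then since $s_i\ge q_i$ I apply (iii): because $b+H_i(s_i)=H_i(s_i)-n<0$ for every $i$, both entries of the maximum in (iii) are strictly negative, contradicting (iii). Either way the assumption $a>b$ is untenable, so $a\le b$.

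The only genuine obstacle is the simultaneous construction on all branches of gradients whose Hamiltonian values lie strictly between the two thresholds $-a$ and $-b$; this is precisely where the strict gap $a>b$ and the continuity of the $H_i$ are used. Once such gradients are available on a full interval of choices rather than at a single crossing point, the sum $\Sigma_i s_i$ can be pushed off zero, and conditions (ii) and (iii)---which, crucially, are assumed for \emph{all} admissible gradients---do the rest. The nondegeneracy $q_i<p_i$ must be secured first, as it is exactly what guarantees the interval of freedom needed for this perturbation.
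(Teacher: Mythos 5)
Your proof is correct and follows essentially the same route as the paper's: argue by contradiction, use the intermediate value theorem on each branch to produce gradients $s_i\in(q_i,p_i)$ with $-a<H_i(s_i)<-b$, and then contradict (ii) or (iii) according to the sign of $\Sigma_i s_i$. Your perturbation of $s_1$ to guarantee $\Sigma_i s_i\neq 0$ is a welcome refinement --- the paper disposes of the degenerate case $\Sigma_i r_i=0$ only implicitly through its initial ``modify by small amounts'' step --- and your unified construction also subsumes the paper's two preliminary cases $\Sigma_i q_i\geq 0$ and $\Sigma_i p_i\leq 0$.
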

\begin{proof}
We argue by contradiction and assume that $a>b$. 
\smallskip

Modifying $p_1,\ldots,p_K, q_1,\dots, q_K, a$ and $b$ by small amounts and using the continuity of $H_1,\ldots,H_K$, we may assume that 
% by $a-\delta$ and $b$ by $b+
%\delta$ and changing sightly the $pibut keeping the same notation we may assume that 
\begin{equation}\label{takis613}
 p_i>q_i \ \ \text{and} \ \  a + \max_iH_i(p_i) <0< b+  \min_iH_i(q_i).
 \end{equation}
%A similar consideration also allows to assume that, for all $i\in \{1,\ldots,K\}$,  $p_i > q_i.$
%\smallskip
%Using standard arguments from the theory of viscosity solutions, we may assume, after some small perturbations, that, for 
%all $i$,
%%in \{1,\ldots,K\}$, 
% $a+H_i(p_i)  <0 < b+ H_i(q_i)$, $\min \left(\Sigma_i p_i',  \min_i(a + H_i(p_i'))\right) < 0$ and  $ \max \left(\Sigma_i q_i',  \max_i(b+ H_i(q_i')) \right) >0.$%inequalities in the statement of the lemma are strict.
%\smallskip
If $\Sigma_i q_i \geq 0$, choose $\ep>0$ small enough so that $q_i +\ep/K <p_i$. Then $p_i'=q_i + \ep/K$ in \eqref{takis1000}(ii) yields 
$\min_i (a + H_i(q_i+\ep/K)) \leq 0,$ and, after letting $\ep\to 0$, $\min_i (a + H_i(q_i)) \leq 0,$ which, in view of \eqref{takis613},  is not possible given that it assumed that $a>b$.  % in view of the assumptions. 
%Since it assumed that   
%$\min_i (H_i(q_i) +b) \geq 0,$ it follows that $a\leq b.$
%\smallskip 
A similar argument yields a contradiction if  $\Sigma_i p_i \leq 0$.
\smallskip

%Similarly, if  $\Sigma_i p_i \leq 0$, then letting $q_i'=p_i$ in the second assumption yields 
%$$\max_i (H_i(p_i) +b) \leq 0,$$
% while the first assumption yields that 
%$$\max_i (H_i(p_i) +a) \leq 0.$$ 
Next we assume that $\Sigma_i p_i > 0 > \Sigma_i q_i,$  %and $a>b.$
%\smallskip
and let $c\in (b,a)$ and $r_i \in (q_i, p_i)$ be such that $H_i(r_i)+c=0.$ If $\Sigma_ir_i \geq 0$ (resp. $\Sigma_ir_i \leq 0$), we choose $p_i'=r_i$ (resp. $q_i'=r_i$)   and argue as before. 
\qed
\end{proof}
%%%%%%
\section{One-dimensional time dependent junctions}

Here we prove Theorem~\ref{main1} for the initial value problem \eqref{takis1}. The proof in the multi-dimensional setting is almost 
identical and we leave it up to the reader to fill in the details. The existence of solutions is immediate from Perron's method or Theorem~\ref{main2}. 
\smallskip

To simplify the presentation here we take 
$$B=0.$$

Although the proof is not long,  to clarify the strategy and highlight the new ideas, we present first a heuristic description of the argument assuming that $u_i,v_i \in C^1(\overline  I_i \times [0,T])$ with possible discontinuities in the spatial derivative as $i$ changes;  note that, since it also assumed that $u,v\in C(\overline I \times [0,T]),$ the previous assumption also gives that $u_t(0,t), v_t(0,t)$ exist for all $t\in (0,T].$ 
\smallskip

Following the proof of the classical maximum principle, we assume that,  for  $\delta>0$, 
%\smallskip
the 

$\max_{x\in \overline I \times [0,T]} \left[ (u-v)(x,t) -\delta t\right]$ is achieved at  $(x_0,t_0) \in  \overline I \times [0,T]$ with  $t_0>0.$ If $x_0\neq 0$, we argue as in the classical uniqueness proof. Hence, we continue assuming that $x_0=0.$  
%\smallskip
Let $a=u_t(0,t_0)$ and $b=v_t(0,t_0)$. It follows that  $a\geq b+\delta >b.$
\smallskip

The functions  $U(x_i)=u(x_i,t_0)$ and $V(x_i)=v(x_i,t_0)$ 
are smooth  sub-and super-(viscosity) solutions to 
\begin{equation}\label{takis1001}
\begin{cases}
 a+  H_i (U_{x_i},x_i,t_0)\leq 0 \ \text{in} \  \overline I_i  \ \ \text{and }  \ \
\min\left(\Sigma_iU_{x_i}(0), a+\min_i  H_i(U_{x_i}(0),0,t_0)\right)\leq 0,\\[1mm] 
%\text{and}\\[1mm]
 b+  H_i (V_{x_i},x_i,t_0)\geq 0 \ \text{in} \  \overline I_i  \ \ \text{and }  \ \
\max \left(\Sigma_i V_{x_i}(0), b+\max_i  H_i(V_{x_i}(0),0,t_0)\right) \geq 0,
\end{cases}
\end{equation}
while  $U(x_i)-V(x_i)\leq U(0)-V(0)$, which in turn implies that $U_{x_i}(0)\geq V_{x_i}(0)$.
\smallskip

We get a contradiction  using Lemma \ref{lemma} provided we verify that \eqref{takis1000} holds for  the obvious choices of $H_1,\ldots,H_K, p_1,\dots, p_K,q_1,\ldots, q_K.$
\smallskip
And this is  immediate since \eqref{takis1000}(i) is part of \eqref{takis1001}, while \eqref{takis1000}(ii),(iii) follow from the observation 
that $J^+U_i(0)=(-\infty, U_{x_i}(0)]$ and $J^-V_i(0)=[V_{x_i}(0), \infty)$ and the fact that inequalities must hold in the viscosity sense.
\smallskip

 We continue now with the actual proof of Theorem~\ref{main1} for \eqref{takis1}, which consists of making the above rigorous for $u,v \in \text{BUC}(\overline I\times [0,T]).$ For the beginning of the proof, we assume that the reader is 
familiar with the ``standard'' arguments of the proof of the comparison of viscosity solutions.  
\medskip

\begin{proof} Arguing  by contradiction we suppose that, for some $\delta>0$ and $t_0\in (0,T)$, 
\begin{equation}\label{takis541}
u(0,t_0)-v(0,t_0)-\delta t_0=\max_{(x,t)\in \overline I \times [0,T]} \left[ (u-v)(x,t) -\delta t\right].
\end{equation}

We leave it up to the reader to convince him/herself that, if the above does not hold as $\delta\to 0$, then the conclusion of the theorem holds.

\smallskip

Replacing $u$ and $v$ respectively by the classical $\sup$- and $\inf$-convolutions in time, we may assume that \eqref{takis541} still holds, perhaps for a different $t_0$ and shorter time interval,  and that $u$ and $v$ are respectively semiconvex and semiconcave and, hence, Lipschitz continuous with respect to $t$. Moreover, in view of the assumed uniform coercivity of the Hamiltonians, it follows that $u$ is also Lipschitz continuous in $x$; note that we do not make a similar claim for $v$. 
\smallskip

Of course, this means that we need to consider \eqref{takis1} in a smaller time interval and evaluate $H$ at a different time. It is, however, standard that this does not alter the outcome, and, hence, we omit the details.
\smallskip

In view of the assumed semiconvexity and semiconcavity of the $u$ and $v$ respectively, both of them are differentiable with respect to  $t$ at $(0,t_0)$. Let $a=u_t(0,t_0)$ and $b=v_t(0,t_0)$. It follows that 
$$a\geq b+\delta >b.$$
%\smallskip
%To get a contradiction, we only need to find $H_1, \dots, H_K$ and $p_1,\ldots, p_K, q_1,\dots,q_K$ such that the assumptions of Lemma \ref{lemma} are satisfied for $a$ and $b$ as above.%, for all $i\in \{1,\ldots,K\}$, $p_i\geq q_i$, $(p_i,a)\in J^+u_i(0,t_0), (q_i,b) \in J^-v_i(0,t_0)$ and $a+H_i(p_i)\leq 0\leq  b +H_i(q_i).$
%\smallskip
The next step  is an observation, which, heuristically speaking, establishes a $C^1$-type property for the sub- and super-differentials  of semiconvex and semiconcave functions near points of differentiability. Since the claim may be useful in other contexts, we state it  as a separate lemma. 
%Note that we are stating the result for convex and concave functions.  The general case follows easily. 
\begin{lemma}\label{lemma2} Let $z:(c,0]\times (0,T) \to \R $ be uniformly continuous in space, and semiconvex in time 
%solution to $z_t + H(u_x,u,x,t)\leq 0 \ \text{in} \  (c,0) \times [0,T]$, for some $c<0$, 
and assume that, for some $t_0\in (0,T)$, $\bar a=z_t(0,t_0)$ exists. If $\partial z(x,t)$ is the subdifferential of $z$ with respect to $t$ at $(x,t)$, then
\begin{equation}\label{takis3}
\lim_{(x,t)\to (0,t_0)}\sup_{p\in \partial z(x,t)}|p-\bar a| =0. % \to 0 \ \ \text{as} \ \   (x,t)\to 0.
\end{equation}
A similar statement is true for the superdifferential in $t$, if $z:(c,0]\times (0,T) \to \R $ is uniformly continuous in space, semiconcave in time and  differentiable at some $(0,t_0)$.
\end{lemma}
\smallskip
The claim follows from  the classical fact that, for  semiconvex functions, derivatives converge to derivatives, and the uniform continuity in $x$. 

% implies that $z$ is actually differentiable at every $(x,t)$ such that $ \partial z(x,t)\neq \emptyset$,  and, for semiconvex functions, derivatives converge to derivatives. The $x$-dependence is dealt using the Lipshitz continuity.
\smallskip

In what follows, all claims and statements hold for $i=1,\ldots, K$.
\smallskip

Continuing the ongoing proof we observe that Lemma~\ref{lemma2} yields  $\eta_i:\overline I_i \times [0,T] \to \R$ such that 
$\lim_{(x_i,t)\to (0,t_0)}\eta_i(x_i,t)=0$ and, in the viscosity sense and in a neighborhood of $(0,t_0)$,
\begin{equation}\label{takis2}
a + H_i(u_{x_i},0,t_0) \leq  \eta_i(x_i,t)  \ \ \text{and} \ \  b + H_i(v_{x_i},0,t_0) \geq  \eta_i(x_i,t).
\end{equation}
Indeed, if $(p_i,\bar p_i) \in J^+u(x,t)$, then $\bar p_i \in \partial u(x,t)$, and the claim follows from the previous observations and the continuity properties of $H_i$. 
\smallskip

Next we use a blow up argument at $(0,t_0)$ on all branches to reduce the problem to a time independent setting to which we can apply Lemma \ref{lemma}. 
\smallskip

For $\ep>0$, let 
$$u^\ep_i(x_i,t)=\frac{u(\ep x_i, t_0 + \ep (t-t_0))-u(0,t_0)}{\ep} \ \ \text{and} \ \  v^\ep_i(x_i,t)=\frac{v(\ep x_i, t_0 + \ep (t-t_0))-v(0,t_0)}{\ep}.$$
%\smallskip
It is immediate that $u^\ep_i$ and $v^\ep_i$ are respectively sub- and super-solution of \eqref{takis1}.
\smallskip

In view of the definition of $(0,t_0)$, the properties of $u$ and $v$ and the observations above, it is immediate that 
%for every $i\in \{1,\ldots,K\}$ and $\ep>0$, we have
\begin{equation}\label{takis4}
u^\ep_i \leq v^\ep_i + \delta (t-t_0).
\end{equation}
Moreover, Lemma~\ref{lemma2} implies that,  as $\ep \to 0$ and locally uniformly in $(x,t)$,
\begin{equation}\label{takis4.1}
%\begin{cases}
u^\ep_{i,t}(x_i,t)=u_t (\ep x_i, t_0 + \ep (t-t_0)) \to a \ \ \ \text{and} \ \  v^\ep_{i,t}(x_i,t)=v_t (\ep x_i, t_0 + \ep (t-t_0)) \to b. 
%v^\ep_{i,t}(x_i,t)=v_t (\ep x_i, t_0 + \ep (t-t_0)) \to b,
%\end{cases}
\end{equation}
Since, for $\tilde w=u^\ep_i$ or $\tilde w=v^\ep_i$ and $w=u_i$ or $w=v_i$, 
\[\tilde w (x_i,t)-\tilde w x_i,t_0)=\dfrac{w(\ep x_i,t_0+ \ep (t-t_0))-w(\ep x_i,t_0)}{\ep}=\int_0^1 w_{\tau}(\ep x_i+ \lambda \ep (t-t_0))d\lambda \  (t-t_0),\]
%and 
%\[v^\ep_{i}(x_i,t)-v^\ep_{i}(x_i,t_0)=v_{i}(\ep x_i,t_0+ \ep (t-t_0))-v_{i}(\ep x_i,t_0)=\int_0^1 v_{i,\tau}(\ep x_i+ \lambda \ep (t-t_0))d\lambda \ \ep (t-t_0),\]
it follows, again from Lemma~\ref{lemma2} and \eqref{takis4.1}, that 
\begin{equation}\label{takis5}
%\begin{cases}
u^\ep_{i}(x_i,t)-u^\ep_{i}(x_i,t_0) \to a(t-t_0) \ \ \text{and} \ \ 
v^\ep_{i}(x_i,t)-v^\ep_{i}(x_i,t_0) \to b(t-t_0).
%\end{cases}
\end{equation}
The space-time Lipschitz continuity of the $u_i$'s, \eqref{takis5} and the stability properties of sub-solutions imply that, along subsequences $\ep_n \to 0$,  
\[u_i^{\ep_n}(x_i,t) \to U_i(x_i) + a(t-t_0)\] and 
\begin{equation}\label{takis614}
%\begin{cases}
a+ \overline  H(U_{i, x_i}) \leq 0,   \ \ \min \left(\Sigma_i U_{i, x_i}, \min_i(a + \overline H_i(U_{i, x_i}))\right) \leq 0 \ \ \text{and} \ \ U_i(0)=0, 
%\text{and} \ \  b+ \overline H(V_{i, x_i}) \geq 0,\\[1mm]
%\min \left(\Sigma_i U_{i, x_i}, \min_i(a + \overline H_i(U_{i, x_i}))\right) \leq 0.
%\end{cases}
\end{equation}
where, for notational simplicity,  we write $\overline H_i(p)$ in place of $H_i(p, 0, t_0).$
\smallskip

Since the $v_i$'s are not necessarily Lipschitz continuous in space, it is not possible to use a similar argument for the 
$v^\ep_i$'s. Instead, we need to consider their  lower half-relaxed limit, which although finite from below, in view of \eqref{takis4}, may be infinite from above.

\smallskip

We go around this potential difficulty as follows. 
The coercivity of the Hamiltonians and the Lipschitz continuity of $u_i$ yield  $M>0$ such that
\[ b+H_i(M)\geq 0 \ \ \text{and} \ \ u^\ep_i(x_i,t_0) \leq Mx_i.\]

Hence $M x_i +b(t-t_0)$ and, therefore,
\[\tilde v^\ep_i(x_,t)= v^\ep_i(x,t) \wedge (Mx_i +b(t-t_0))\]
is a super-solution of \eqref{takis1}.
\smallskip

It follows from the choice of $M$ and \eqref{takis4}, \eqref{takis4.1} and \eqref{takis5} that the lower-half relaxed limit $V_{i,_\star}$ of the $\tilde v^\ep_i$ given by 
\[V_{i,_\star}(x_i,t)=\underset{\ep \to 0, (y_i,s) \to (x_i,t)} {\liminf} \tilde v^\ep(y_s,s), \]
is  finite from above and below, and that there exists a lower semicontinuous $V_i:\bar I_i \to \R$ such that 
%in view of the choice of $M$ and \eqref{takis4}, 
%\eqref{takis4.1} and \eqref{takis5}.
\[V_{i,_\star}(x_i,t)=V_i(x_i)+b(t-t_0),  \ \ U_i\leq V_i  \ \text{in}  \ (-\infty,0] \ \  \text{and} \ \ V_{i}(0)=0.\]
The stability properties of the solutions also imply that 
%
%
%
% 
%
%and  $v_i^{\ep_n}(x_i,t) \to V_i(x_i) + b(t-t_0)$
%and notice that, in view of \eqref{takis5}, both $U_i$ and $V_i$ are independent of $t$.
%\smallskip
\begin{equation}\label{takis6}
 b+ \overline H(V_{i, x_i}) \geq 0 \ \ \text{and} \ \ \max \left(\Sigma_i V_{i, x_i}, \max_i(b + \overline H_i(V_{i,x_i}))\right)\geq 0.
 \end{equation} 
% In 
%\min \left(\Sigma_i U_{i, x_i}, \min_i(a + \overline H_i(U_{i, x_i}))\right) \leq 0,\\[1mm] % \ \ \text{on} \ \  \{0\} \times(0,T],\\[1mm]   
%\max \left(\Sigma_i V_{i, x_i}, \max_i(b + \overline H_i(V_{i,x_i}))\right)\geq 0,  % \ \ \text{on} \ \  \{0\} \times(0,T].
%\end{cases}
%\end{equation}
Finally, we recall that $$a>b.$$
%; for notational simplicity in \eqref{takis6} we write $\overline H_i(p)$ in place of $H_i(p, 0, t_0).$
%and, finally, as local uniform limits of $u_i^{\ep_n}$ and $v_i^{\ep_n}$, $U_i$ and $V_i$ satisfy the Kirchoff boundary conditions.
%\smallskip
Next we get a contradiction using  Lemma \ref{lemma}. While the choice of the $H_i$'s is obvious, some work is necessary to identify  $p_1,\ldots,p_K,q_1,\ldots,p_K$ such that  \eqref{takis1000} holds. 
%\begin{equation}\label{takis7}
%%\begin{cases}
%p_i\geq q_i \ \  \text{and} \ \  a+ \overline H_i(p_i) \leq 0 \leq b+ \overline H_i(q_i),\\[1mm]
%%a+ H_i(p_i,0,t_0) \leq 0 \leq b+ H_i(q_i,0,t_0),
%%\end{cases}
%\end{equation}
%and
%\begin{equation}\label{takis30}
%p_i\in J^+U_i(0) \ \ \ \text{and} \ \ \ q_i\in J^-V_i(0),
%\end{equation}
%the latter being necessary in order to use the boundary conditions to satisfy the second assumption of Lemma~\ref{lemma}.
%% that we can apply the boundary conditions.
\smallskip

%We may now conclude as in the beginning of the proof.
Set  $$\underbar p_i:= \liminf\limits_{x_i\to 0}\frac{U_{i}(x_i)}{x_i}, \ \  \overline p_i:=\limsup\limits_{x_i\to 0}\frac{U_{i}(x_i)}{x_i}, \ \ 
\underbar q_i:= \liminf\limits_{x_i\to 0}\frac{V_{i}(x_i)}{x_i} \ \ \text{ and} \ \   \overline q_i:=\limsup\limits_{x_i\to 0}\frac{V_{i}(x_i)}{x_i},$$
and recall that 
 $J^+U_i(0)=(-\infty, \underbar p_i]$ and  $J^-V_i(0)=[\overline q_i, \infty)$.
 \smallskip
 
Observe that  $U_i\leq V_i$ %the inequality   $\frac{U_i (x_i)}{x_i} \geq \frac{V_i(x_i)}{x_i}$ 
does not necessarily yield $\underbar p_i \geq \overline q_i,$ the latter being enough to conclude using  Lemma \ref{lemma} with $p_i=\underbar p_i$ and $q_i=\overline q_i$. Notice, however, that $U_i\leq V_i$  implies  
\begin{equation}\label{takis1002}
\overline p_i\geq \underbar q_i.
\end{equation}
%\smallskip

Although $\overline p_i \notin J^+U_i(0)$ and $\underbar q_i \notin J^-V_i(0)$, unless $U_i$ and $V_i$ are respectively differentiable at $0$,  we claim that \eqref{takis1000} holds for $p_i=\overline p_i$ and $q_i=\underbar q_i.$

% all $p'_1,\ldots, p'_K$ such that $p'_i\leq \overline p_i$ satisfy \eqref{the condition of the second part of  Lemma~\ref{lemma}.

\smallskip

A classical blow-up argument (see, for example, Jensen and Souganidis \cite {JS}) shows that 
\begin{equation}\label{takis9}
a+ \overline H_i(p) \leq 0  \ \text{for all }  \  p\in [\underbar p_i, \overline p_i] \ \ \text{and} \ \  b+ \overline H_i(q) \geq 0  \ \text{for all } \  q\in [\underbar q_i, \overline q_i].
\end{equation}
Moreover, if $p_i'\leq \underbar p_i$ for all $i$, then $p'_i\in J^+U_i(0)$  and, hence, $\min \left(\Sigma_i p_i', \min_i(a+ \overline  H_i(p_i')) \right) \leq 0.$ 
\smallskip

If, for some fixed $i_{i_0}$, $p'_{i_0} \in [\underbar p_{i_0}, \overline p_{i_0}],$  then, in view of \eqref{takis9}, $a+ H_{i_0}(p'_{i_0}, 0, t_0) \leq 0$, and again  $\min \left(\Sigma_i p_i', \max_i(a+ \overline  H_i(p_i')) \right) \leq 0.$ It follows that  \eqref{takis1000}(ii) holds. 
\smallskip

A similar argument yields \eqref{takis1000}(iii), while \eqref{takis1000}(i) is obviously true, in view of \eqref{takis1002} and \eqref{takis9}. % and %the other direction in the second part of Lemma~\ref{lemma}, and the proof is now complete. 
\end{proof}
%%%%%%%%%%%%%%%%
\section{Flux-limiter solutions are generalized Kirchoff solutions}

We show here that the  flux-limiter solutions to  time-depending one dimension junction problems, which were introduced in \cite{IM}, are actually  generalized viscosity  solutions to \eqref{takis1} for an appropriate choice of $B$ in the Kirchoff-condition. 
\smallskip

We begin recalling the notion of flux-limiter solution. Following \cite{IM}, we assume that, for all $i=1,\ldots,K$, 
\begin{equation}\label{takis50}
\widetilde H_i  \in C(\R)  \ \text{is convex with a unique minimum at $p_i^0$},
\end{equation}
and define $\widetilde H_i^\pm:\R \to \R$ by
\begin{equation}\label{takis51}
\widetilde H_i^-(p)=\begin{cases} \widetilde H_i(p) \ \text{ if} \ p\leq p_i^0,\\[1mm]
\widetilde H_i(p_i^0) \ \text{ if} \ p\geq p_i^0,
\end{cases} \ \ \text{and} \ \ \ 
 \widetilde H_i^+(p)=\begin{cases} \widetilde H_i(p_i^0) \ \text{ if} \ p\leq p_i^0,\\[1mm]
\widetilde H_i(p) \ \text{ if} \ p\geq p_i^0;
\end{cases} 
\end{equation}
note that \cite {IM} considers quasiconvex $\widetilde H_i$'s but to simplify the presentation here we assume convexity. Finally, to simplify the presentation we assume that we deal with continuous solutions.
\smallskip

Fix  $A\geq A_0=\max_i\min_\R \widetilde H$ and let $\widetilde I_i=(0,\infty)$ and $\widetilde I=\bigcup_{i=1}^K \widetilde I_i$. Then  $\widetilde u\in BUC(\overline {\widetilde I} \times [0,T])$
%, where $\widetilde I=\bigcup_{i=1}^K \widetilde I_i$ and $\widetilde I_i=(0,\infty)$, 
is an  $A$-limiter solution of the junction problem if 
\begin{equation}\label{takis52}
\begin{cases}
\widetilde u_{i,t} + \widetilde H_i(\widetilde  u_{i,{x_i}}) =0 \ \ \text{ in} \ \  \widetilde I_i \times (0,T],\\[1mm]
\widetilde u_t + \max (A, \max_i \widetilde {H}^-_i(\widetilde  u_{i,{x_i}}))=0 \ \ \text{on}  \ \  \{0\}\times  (0,T].
\end{cases}
\end{equation} 
For each $i$, let $p_i^A$ be the unique solution to $\widetilde H_i(p)=A$ such that $p_i^A \geq p_i^0$, which exists in view of \eqref{takis51}.
\begin{proposition}
If $\widetilde u$ is an $A$-limiter solution, that it satisfies \eqref{takis52}, then $u:\overline I \to R$ defined by $u(x)=\widetilde u(-x)$
is a generalized solution to \eqref{takis1} for  $B= - \Sigma_{i=1}^K p_i^A$ and $H_i(p)=\widetilde H_i(-p).$
\end{proposition}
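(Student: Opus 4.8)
The plan is to translate the definition of an $A$-flux-limiter solution into the Kirchoff-type conditions of \eqref{takis1} under the reflection $x \mapsto -x$, and then to verify the two junction inequalities directly from the structure of the convex Hamiltonians. First I would set up the change of variables carefully. Since $u(x) = \widetilde u(-x)$ on $\overline I$ (with $I_i = (-\infty,0)$ and $\widetilde I_i = (0,\infty)$), the chain rule gives $u_{i,x_i}(x_i) = -\widetilde u_{i,x_i}(-x_i)$, so in particular the exterior normal derivative on $L$ transforms as $u_{i,x_i}(0) = -\widetilde u_{i,x_i}(0)$. With $H_i(p) = \widetilde H_i(-p)$ one checks that $u_{i,t} + H_i(u_{i,x_i}) = \widetilde u_{i,t} + \widetilde H_i(\widetilde u_{i,x_i})$, so the interior equation on each branch $\Pi_i$ is immediately equivalent to the interior equation in \eqref{takis52}, in both the sub- and super-solution senses. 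This reduces the whole proposition to matching the junction conditions.

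Next I would analyze the flux-limiter junction condition $\widetilde u_t + \max(A, \max_i \widetilde H_i^-(\widetilde u_{i,x_i})) = 0$ and show it implies both the $\min$- and the $\max$-inequalities of \eqref{takis1} with $B = -\Sigma_i p_i^A$. The key structural facts I would exploit are: (a) $\widetilde H_i^-$ is the nonincreasing envelope of $\widetilde H_i$, equal to $\widetilde H_i$ on $(-\infty, p_i^0]$ and constant $= \min \widetilde H_i$ on $[p_i^0, \infty)$; and (b) for $A \ge A_0$, the equation $\widetilde H_i(p) = A$ has a unique solution $p_i^A$ with $p_i^A \ge p_i^0$, where $\widetilde H_i^-(p) = A$ precisely when $p \le p_i^A$. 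Converting to the reflected variables, $\Sigma_i u_{i,x_i} - B = \Sigma_i(-\widetilde u_{i,x_i}) + \Sigma_i p_i^A = \Sigma_i(p_i^A - \widetilde u_{i,x_i})$, so the sign of $\Sigma_i u_{i,x_i} - B$ is governed by how the gradients $\widetilde u_{i,x_i}$ compare to the threshold values $p_i^A$. The plan is to use this to show that whenever the flux-limiter condition holds as a sub-solution inequality, either some branch gradient exceeds its threshold (forcing $\Sigma_i u_{i,x_i} - B \le 0$) or $\widetilde u_t + A \le 0$, which through $H_i(p_i^A) = \widetilde H_i(-p_i^A) \cdot$(appropriate sign bookkeeping) feeds into the $\min_i(u_{i,t} + H_i)$ term; the super-solution inequality is handled symmetrically.

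The main obstacle I expect is the careful sign and envelope bookkeeping needed to show the two one-sided Kirchoff inequalities are exactly equivalent to the single flux-limiter equation, rather than merely implied by it. Specifically, the flux-limiter formulation uses the \emph{decreasing} envelopes $\widetilde H_i^-$ in a single equation, whereas \eqref{takis1} uses the full $H_i$ in two separate $\min$/$\max$ inequalities together with the flux constraint $\Sigma_i u_{i,x_i} = B$. The delicate point is that the flux-limiter value $A$ must be shown to coincide with the ``effective'' flux level selected by the Kirchoff condition, and this is exactly where the relation $A \ge A_0 = \max_i \min_\R \widetilde H_i$ and the choice $B = -\Sigma_i p_i^A$ become essential: one must verify that at the junction the admissible gradients picked out by $\widetilde H_i^-(\widetilde u_{i,x_i}) \le A$ correspond under reflection to the gradients for which Lemma~\ref{lemma}-type inequalities hold. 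I would argue this by testing with the two natural families of sub/super-jets at the junction, using that $J^+$ and $J^-$ in the viscosity formulation encode precisely the half-line gradient constraints, and checking the boundary conditions branch-by-branch in the regimes $\widetilde u_{i,x_i} \le p_i^0$, $p_i^0 \le \widetilde u_{i,x_i} \le p_i^A$, and $\widetilde u_{i,x_i} \ge p_i^A$. The routine algebra of composing the reflection with the envelope definitions I would not spell out in full, but the equivalence of the junction conditions in all three gradient regimes is the crux that must be checked with care.
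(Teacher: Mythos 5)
Your proposal follows essentially the same route as the paper: reflect via $u(x)=\widetilde u(-x)$, note that the interior equations on each branch transform into one another, and verify the two Kirchoff junction inequalities by testing sub/super-jets at the junction and splitting into the gradient regimes determined by $p_i^0$ and $p_i^A$, using the envelope structure of $\widetilde H_i^{\pm}$ together with the identity $\Sigma_i u_{i,x_i}-B=\Sigma_i(p_i^A-\widetilde u_{i,x_i})$ — the paper does exactly this, arguing by contradiction when $\Sigma_i(p_i^A-p_i)>0$ to extract a single branch $i_0$ with $p_{i_0}<p_{i_0}^A$ on which $a+\widetilde H_{i_0}(p_{i_0})\leq 0$. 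The one thing to let go of is the ``main obstacle'' you anticipate: the proposition asserts only the one-way implication (flux-limiter solution implies generalized Kirchoff solution), not an equivalence, so the jet-testing case analysis you describe is all that is needed; also, it is the \emph{sum} $\Sigma_i(p_i^A-\widetilde u_{i,x_i})$, not a single branch gradient exceeding its threshold, that controls the sign of $\Sigma_i u_{i,x_i}-B$.
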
 
 \begin{proof} The conclusion follows once we show that $\widetilde u$ is a solution to
 \begin{equation} \label{takis53}
 \begin{cases}
 \widetilde u_{i,t} + \widetilde H_i(\widetilde u_{i,{x_i}}) =0 \ \  \text{ in} \ \  \widetilde I_i \times (0,T],\\[1mm]
 \min\left( - \Sigma_{i=1}^K \widetilde u_{i,{x_i}} - B, \min_i( \widetilde u_{i,t} +\widetilde H_i(\widetilde u_{i,{x_i}})\right) \leq 0 \ \ \text{on} \ \  \{0\}\times (0,T],\\[1mm]
 \max\left( - \Sigma_{i=1}^K \widetilde u_{i,{x_i}} - B, \max_i( \widetilde u_{i,t} +\widetilde H_i(\widetilde u_{i,{x_i}})\right) \geq 0 \ \ \text{on} \ \  \{0\}\times (0,T].
 \end{cases}
 \end{equation}
Clearly we only need check the inequalities on $\{0\}\times (0,T].$ We begin with the sub-solution property and assume that, for some $t_0 \in (0,T]$ and  for each $i$, $(p_i, a) \in J^+u_i(0,t_0).$  
\smallskip

Since $\widetilde u$ is an $A$-limiter solution, for all $i$, we have 
\begin{equation}
%\begin{cases}
a  + A=a+ \widetilde H_(p_i^A) \leq 0 \ \ \text{and} \ \ a+ {\widetilde H}^-_i(p_i) \leq 0.
%\end{cases}
\end{equation}
 \end{proof}
Assume that  
\begin{equation}\label{takis54}
 - \Sigma_{i=1}^K p_i + \Sigma_{i=1}^K p_i^A >0.
 \end{equation}
 It then follows that there exists $i_0$ such  $p_{i_0} < p_{i_0}^A$. Then, if $ p^0_{i_0}\leq p_{i_0}$, since we are in the increasing part of $\widetilde H_{i_0}$,  we have $\widetilde H_{i_0}(p_{i_0}) \leq \widetilde H_{i_0}(p_{i_0}^A)=A$, and, hence
 \begin{equation}\label{takis55}
 a + \widetilde H_{i_0}(p_{i_0}) \leq 0.
 \end{equation}
If  $ p^0_{i_0}\geq p_{i_0},$ then $\widetilde H_{i_0} (p_{i_0})=\widetilde { H}^-_{i_0} (p_{i_0}),$ and again we have  \eqref{takis55},
and, hence, the sub-solution property.
\smallskip

For  the super-solution property we assume that, for some $t_0 \in (0,T]$ and  for each $i$, $(q_i, a) \in J^-u_i(0,t_0).$  It then follows from the definition of the $A$-limiter solution that 
\begin{equation}\label{takis56}
a + \max ( A, \max_i \widetilde H^-_{i}(q_{i}) )\geq 0.
\end{equation}
If  $\max_i \widetilde H^-_{i}(q_{i})\geq A$, then, since $ \widetilde H_{i}(q_{i}) \geq  \widetilde H^-_{i}(q_{i})$, 
\begin{equation}\label{takis57}
\max ( - \Sigma_{i=1}^K q_i - B, \max_i( a +\widetilde H_i(q_i))) \geq 0.
\end{equation} 
If  $A>\max_i \widetilde H^-_{i}(q_{i})$, then, for all $i$, 
\begin{equation}\label{takis58}
a+ \widetilde H^+_{i}(p_i^A)=a+A \geq 0.
\end{equation} 
Assume that $ - \Sigma_{i=1}^K q_i + \Sigma_{i=1}^K p_i^A\leq 0$, for otherwise \eqref{takis57} is satisfied.
\smallskip

Then there must exist some $i_0$ such that $q_{i_0} \geq p_{i_0}^A$, which implies that $\widetilde H_{i_0}(q_{i_0})\geq  \widetilde H^+_{i_0}(p_{i_0}^A)=A$, and \eqref{takis57} holds again.
\smallskip

The claim then follows using that $u(x)=\widetilde u(-x).$ \qed

%%%%%%%%%

%%%%%%%%%%%%

\noindent ($^{1}$) College de France,
11 Place Marcelin Berthelot, 75005 Paris, 
and  
CEREMADE, 
Universit\'e de Paris-Dauphine,
Place du Mar\'echal de Lattre de Tassigny,
75016 Paris, FRANCE\\ 
email: lions@ceremade.dauphine.fr
\\ \\
\noindent ($^{2}$) Department of Mathematics 
University of Chicago, 
5734 S. University Ave.,
Chicago, IL 60637, USA\\ 
email: souganidis@math.uchicago.edu
\\ \\
($^{3}$)  Partially supported by the National Science Foundation and the Office for Naval Research.


\begin{thebibliography}{99}

\bibitem{AT} Y.~Achdou and N.~Tchou, Hamilton-Jacobi Equations on Networks as Limits of
Singularly Perturbed Problems in Optimal Control, Commun. PDE 40 (2015), 652Ð-693. 

\bibitem{BBC1} G.~Barles, A.~Briani and E.~Chasseigne, A Bellman approach for two-domains optimal control problems in $\R^N$.  ESAIM Control Optim. Calc. Var. 19 (2013), no. 3, 710Ð-739. %DOI 10.1051/cocv/2012030, 2013.

\bibitem{BBC2} G.~Barles, A.~Briani and E.~Chasseigne,  A Bellman approach for regional optimal control problems in $\R^N$. SIAM J. Control Optim. 52 (2014), no. 3, 1712Ð-1744.

\bibitem{BBCI} G.~Barles, A.~Briani, E.~Chasseigne and C.~Imbert, Flux-limited and classical viscosity solutions for regional control problems, preprint.

\bibitem{BC} G.~Barles and E.~Chasseigne, (Almost) everything you always wanted to know about deterministic control problems in stratified domains. Netw. Heterog. Media 10 (2015), no. 4, 809Ð-836.

\bibitem{BH}  A.~Bressan and Y.~Hong, Optimal control problems on stratified domains. Netw. Heterog. Media., 2 (2007), 313-331 (electronic) and Errata corrige: Optimal control problems on stratified domains. Netw. Heterog. Media., 8 (2013), 625.
 
\bibitem{IM} C.~Imbert and R.~Monneau, Flux-limited solutions for quasi-convex HamiltonÐJacobi equations on
networks preprint, 2013.  arXiv 1306.2428

\bibitem{IN} C.~Imbert and V.~D.~Nguyen, Generalized junction conditions for degenerate parabolic equations. preprint, 2016. arXiv:1601.01862

\bibitem{JS} R.~Jensen and P.~E.~Souganidis, A regularity result for viscosity solutions of Hamilton-Jacobi equations in one space dimension. 
Trans. Amer. Math. Soc. 301 (1987), no. 1, 137Ð-147.

%\bibitem{L}  P.-L.~Lions, Lectures at College de France, Winter 2016.

\bibitem{LS1} P.-L.~Lions and P.~E.~Souganidis, Viscosity solutions for junctions: well posedness and stability, Rendiconti Lincei - matematica e applicazioni  27 (2016), no. 4, 535--545.

\bibitem{LS2} P.-L.~Lions and P.~E.~Souganidis, in preparation. %Viscosity solutions for  junction-type problems. In preparation.


%\bibitem{So} H.~M.~Soner, Optimal control with state-space constraint. I. SIAM J. Control Optim. 24 (1986), no. 3, 552Ð561.


\end{thebibliography}
\end{document}